    \def\qed{\hfill$\sqcap\kern-8.0pt\hbox{$\sqcup$}$\\}
    \def\L{{\mathcal L}}
    \def\D{{\mathcal D}}
    \def\r{{\mathbb R}}
    \def\wr{{\textnormal{Wr}}}
    \def\d{{\textnormal d}}
	\newtheorem{theorem}{Theorem}
	\newtheorem{lemma}{Lemma}
	\newtheorem{proposition}{Proposition}
\title{On Gegenbauer polynomials and Wronskian determinants of trigonometric functions }
\author{ 
{Minjian Yuan \footnote{Dept. of Mathematics and Statistics,  York University,
4700 Keele Street, Toronto, ON, M3J 1P3, Canada.   Email: yuanm@yorku.ca}}}
\date{\today}
\begin{document}
%**************************************************************************************************
%**************************************************************************************************
%**************************************************************************************************
\maketitle

\begin{abstract}
In \cite{Larsen_1990} M. E. Larsen evaluated the Wronskian determinant of functions $\{\sin(mx)\}_{1\le m \le n}$. We generalize this result and compute the Wronskian of $\{\sin(mx)\}_{1\le m \le n-1}\cup \{\sin((k+n)x\} $. We show that this determinant can be expressed in terms of Gegenbauer  orthogonal polynomials and we give two proofs of this result: a direct proof using recurrence relations and a less direct (but, possibly, more instructive)  proof based on Darboux-Crum transformations. 
\end{abstract}

{\vskip 0.15cm}
 \noindent {\it Keywords}: Wronskian determinant, Gegenbauer polynomials, Darboux transformation, killed Brownian motion, transition probability
{\vskip 0.25cm}
\noindent {\it 2020 Mathematics Subject Classification }: Primary 15A15,  Secondary  33C45

%*******************************************************************************
%*******************************************************************************
%*******************************************************************************
\section{Introduction and the main result}\label{section_intro}
%*******************************************************************************
%*******************************************************************************
%*******************************************************************************

Determinants play a crucial role in various branches of mathematics. For example, the Vandermonde determinant is widely used in polynomial interpolation, whereas determinants of Toeplitz matrices feature prominently in random matrix theory and integrable systems. Consequently, there have been numerous studies on finding explicit formulas for specific determinants. In \cite{Houston_2024}, R.~Houston, A.\,P.~Goucher, and N.~Johnston present a new explicit formula for the determinant that contains superexponentially fewer terms than the usual Leibniz formula. In \cite{Sun_2024}, Z.~Sun evaluates a determinant involving quadratic residues modulo primes. Our inspiration for this work comes from the following explicit formula obtained by M.\,E.~Larsen in \cite{Larsen_1990}:
 \begin{equation}\label{W_n^0}
\wr\{\sin(x),\sin(2x),\sin(3x),\dots,\sin((n-1)x),\sin(nx) \}= (-2)^{\frac{n(n-1)}{2}}\sin^{\frac{n(n+1)}{2}}(x)\cdot G(n+1),
 \end{equation}
where $G(\cdot)$ stands for the Barnes $G$-function and its value at positive integers is given by 
$$ 
G(n+1)=\prod_{j=0}^{n-1}j!
$$ 
To present our main result, which is an extension of \eqref{W_n^0},  we denote $W^{(k)}_1(x):=\sin((k+1)x)$ and
 \begin{equation*}
W^{(k)}_n(x):=\wr\{\sin(x),\sin(2x),\sin(3x),\dots,\sin((n-1)x),\sin((n+k)x) \}, \;\; \; n\ge 2. 
 \end{equation*}
The following theorem constitutes our main result.
\begin{theorem}\label{thm1} 
For integer $k\ge 0$ and $n\ge 1$ 
\begin{equation}\label{eqn:Wkn_thm1}
W^{(k)}_{n}(x)=(-2)^{\frac{n(n-1)}{2}}\sin^{\frac{n(n+1)}{2}}(x)G(n+1)C^{(n)}_{k}(\cos(x)),
 \end{equation}
where $C^{(n)}_{k}(x)$ is the Gegenbauer polynomial.
\end{theorem}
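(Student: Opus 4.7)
My plan is to prove Theorem~\ref{thm1} via the Crum iteration of the Darboux transformation, an approach strongly suggested both by the form of the answer (Gegenbauer polynomials are eigenfunctions of P\"oschl--Teller operators) and by the fact that each $\sin(mx)$ is an eigenfunction of the free Schr\"odinger operator $H_0 := -\partial_x^2$ with eigenvalue $m^2$. Taking $\phi_m(x) := \sin(mx)$ for $m = 1, \ldots, n-1$ as seeds, Crum's theorem states that
\[
\tilde\psi_k(x) \;:=\; \frac{W^{(k)}_n(x)}{W^{(0)}_{n-1}(x)}
\]
is an eigenfunction, with eigenvalue $(n+k)^2$, of the transformed operator $\tilde H := H_0 - 2\bigl(\log W^{(0)}_{n-1}\bigr)''$.

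Using Larsen's formula~\eqref{W_n^0}, which gives $W^{(0)}_{n-1}(x) \propto \sin^{n(n-1)/2}(x)$, a two-line computation yields $\tilde H = -\partial_x^2 + n(n-1)\csc^2(x)$, the standard trigonometric P\"oschl--Teller operator. Inserting the ansatz $\tilde\psi_k(x) = \sin^n(x)\, g(\cos x)$ and changing variables $y = \cos x$ reduces $\tilde H \tilde\psi_k = (n+k)^2 \tilde\psi_k$ to the Gegenbauer equation
\[
(1-y^2)\, g''(y) - (2n+1)\, y\, g'(y) + k(k+2n)\, g(y) \;=\; 0.
\]
Since $W^{(k)}_n$ is entire on $\mathbb{R}$ while $W^{(0)}_{n-1}$ vanishes only to finite order at $x = 0$, an indicial analysis at $x = 0$ (the roots being $n$ and $1-n$) forces $g$ to be the polynomial solution, so $g(y) = c_{n,k}\, C^{(n)}_k(y)$ for some constant $c_{n,k}$.

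Combining with Larsen's formula~\eqref{W_n^0} already delivers the predicted shape
\[
W^{(k)}_n(x) \;=\; c_{n,k}\, (-2)^{(n-1)(n-2)/2}\, G(n)\, \sin^{n(n+1)/2}(x)\, C^{(n)}_k(\cos x),
\]
so only the value $c_{n,k}$ remains to be pinned down. For $k = 0$ the statement collapses to Larsen's identity (using $C^{(n)}_0 \equiv 1$), which immediately gives $c_{n,0} = (-2)^{n-1}(n-1)!$. To propagate this to every $k$, I would extract the leading Taylor coefficient of $W^{(k)}_n$ at $x = 0$ using the exponential representation $\sin(mx) = (e^{\i mx} - e^{-\i mx})/(2\i)$ together with the Vandermonde identity $\wr\{e^{\alpha_j x}\}_{j=1}^n = \prod_{j<l}(\alpha_l - \alpha_j)\exp\!\bigl(x \textstyle\sum_j \alpha_j\bigr)$, which unfolds the Wronskian as an explicit finite sum over sign assignments, and then match the coefficient of $x^{n(n+1)/2}$ against the known value $C^{(n)}_k(1) = \binom{2n+k-1}{k}$.

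The principal obstacle is this last constant-matching: the Darboux--Crum reduction and the passage to the Gegenbauer ODE are essentially automatic once the framework is set up, but isolating the $x^{n(n+1)/2}$ coefficient from the $2^n$-term sign-assignment sum requires a genuine combinatorial cancellation. An appealing alternative, and probably the \emph{direct proof using recurrence relations} mentioned in the abstract, would be to establish instead the three-term recurrence
\[
k\, W^{(k)}_n(x) \;=\; 2(k+n-1)\cos(x)\, W^{(k-1)}_n(x) - (k+2n-2)\, W^{(k-2)}_n(x),
\]
which mirrors the Gegenbauer three-term recurrence and would let an induction on $k$ close the argument from the base cases $k = 0, 1$.
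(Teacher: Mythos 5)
Your Darboux--Crum reduction is sound and is essentially the route taken in the paper's Section~\ref{proof-2}: the seeds $\sin(mx)$, $m=1,\dots,n-1$, transform the free operator into the P\"oschl--Teller operator with potential $n(n-1)\csc^2(x)$, and the quotient $W_n^{(k)}/W_{n-1}^{(0)}$ is an eigenfunction with eigenvalue $(n+k)^2$, which after the substitution $y=\cos x$ lands on the Gegenbauer equation. One point in this part needs more care than you give it: the claim that entirety of $W_n^{(k)}$ ``forces $g$ to be the polynomial solution'' is not by itself a proof. The quotient is merely meromorphic at $x=0$, and the singular Frobenius solution $\sim x^{1-n}$ is also only a finite-order pole, so finiteness alone does not exclude it. You need a lower bound on the order of vanishing of $W_n^{(k)}$ at $x=0$ (it is in fact exactly $n(n+1)/2$, which one reads off by choosing a basis of the span of these $n$ independent odd analytic functions adapted to the vanishing orders $1,3,\dots,2n-1$); with that in hand the indicial argument does close.

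The more serious issue is that the constant $c_{n,k}$ is never computed, and you correctly identify this as the principal obstacle: for $k\ge 1$ your argument only delivers $W_n^{(k)}(x)=c_{n,k}\,(-2)^{(n-1)(n-2)/2}G(n)\sin^{n(n+1)/2}(x)\,C_k^{(n)}(\cos x)$ with $c_{n,k}$ undetermined, which is the shape of the answer but not the theorem. The Taylor-coefficient route you sketch requires evaluating the sign-assignment sum
\begin{equation*}
\sum_{\epsilon\in\{\pm1\}^n}\ \prod_{j}\epsilon_j\prod_{j<l}(\epsilon_l m_l-\epsilon_j m_j)\Bigl(\sum_{j}\epsilon_j m_j\Bigr)^{\frac{n(n+1)}{2}},
\end{equation*}
a genuinely nontrivial identity that you do not prove; and your alternative three-term recurrence in $k$ alone, while equivalent to the theorem, is neither derived nor obviously accessible by column operations (multiplying the last column by $\cos x$ is not a Wronskian-preserving move), and its base case $k=1$ would still have to be computed separately. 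It is worth knowing that the paper's own Darboux proof also stops short here: it pins down $|c_{n,k}|$ by computing $L_2$ norms via an integration-by-parts identity and obtains the result only up to a $\pm$ sign. The complete proof in the paper is instead the recurrence argument of Section~\ref{section:proof-1}, built on the mixed recurrence $W_n^{(k+2)}=W_n^{(k)}+(-2)^{n-1}(n+k+1)(n-2)!\sin^n(x)\,W_{n-1}^{(k+2)}$ in \emph{both} indices, which is derived from the Chebyshev three-term relation together with the factorization $\wr\{f,fg_1,\dots,fg_{n-1}\}=f^n\wr\{g_1',\dots,g_{n-1}'\}$, plus the explicitly computed base case $k=1$. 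Until you either evaluate your combinatorial sum or derive a workable recurrence, the proof is incomplete.
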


We provide two proofs of Theorem~\ref{thm1}. In Section~\ref{section:proof-1}, we present a direct proof based on recurrence relations. In Section~\ref{proof-2}, we give a more intuitive (though longer) proof using Darboux--Crum transformations. More precisely, the second proof only establishes \eqref{eqn:Wkn_thm1} up to a $\pm$ sign, but it clearly illustrates how and why Gegenbauer polynomials appear in \eqref{eqn:Wkn_thm1}.

%*******************************************************************************
%*******************************************************************************
%*******************************************************************************
\section{Computing Wronskians using recurrence relations}\label{section:proof-1}
For a smooth function $f : \r \mapsto \r$ we define 
\begin{equation}
     {\mathbf V}_n(f(x)):=(f(x), f'(x), f''(x),\dots, f^{(n)}(x))^T \in \r^{n+1}
\end{equation}
We now present the following proposition from \cite{Larsen_1990}:
\begin{proposition}\label{Prop1}
The Wronskian determinant satisfies  
 \begin{equation*}
\wr \{ f, fg_1,...fg_{n-1}\}=f^n \wr \{ g_1,...,g_{n-1}\}
\end{equation*}
where $f,g_1,,\dots,g_{n-1}$ can be any smooth functions.
\end{proposition}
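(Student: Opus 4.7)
The plan is to reduce the $n \times n$ Wronskian matrix by a combination of the multilinearity of the determinant (which permits pulling common factors out of individual rows) and elementary row operations (which preserve its value). Setting $g_0 := 1$ so that every entry takes the uniform form $(fg_j)^{(i)}$, I would use Leibniz's rule $(fg_j)^{(i)} = \sum_{\ell=0}^{i} \binom{i}{\ell} f^{(\ell)} g_j^{(i-\ell)}$ as the basic algebraic tool, and aim to show that after suitable row reduction one factor of $f$ can be extracted from each of the $n$ rows.

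The core of the argument is an induction on the row index. Row $1$ is $(f, fg_1, \dots, fg_{n-1})$ and immediately yields a factor $f$, after which it reads $(1, g_1, \dots, g_{n-1})$. For row $k \geq 2$, Leibniz's expansion produces a leading term $fg_j^{(k-1)}$ together with subleading contributions $\binom{k-1}{\ell} f^{(\ell)} g_j^{(k-1-\ell)}$ for $\ell \geq 1$. Assuming inductively that rows $1, \dots, k-1$ have already been reduced so that the $i$-th row has entries $g_j^{(i-1)}$ in column $j+1$ (with a leading $1$ in row $1$ and leading $0$'s elsewhere), the subleading terms in row $k$ can be killed by subtracting $\binom{k-1}{\ell} f^{(\ell)}$ times row $k-\ell$ for $\ell = 1, \dots, k-1$. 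The same cancellations also zero out the first entry of row $k$ (since the first-column entry $f^{(k-1)}$ is precisely cancelled by the $\ell = k-1$ contribution), leaving $(0, fg_1^{(k-1)}, \dots, fg_{n-1}^{(k-1)})$, from which another factor of $f$ is extracted. Once all $n$ rows have been processed, the accumulated prefactor is $f^n$ and the residual matrix has first column $(1, 0, \dots, 0)^T$; expansion along that column produces the desired $(n-1) \times (n-1)$ Wronskian on the right-hand side.

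The main obstacle is the combinatorial bookkeeping inside the inductive step, where one has to verify that the binomial coefficients $\binom{k-1}{\ell}$ generated by Leibniz's rule are exactly the multipliers required to cancel against the already-simplified rows $k-1, k-2, \dots, 1$ --- in particular that the scheme does not reintroduce nonzero entries in earlier columns. A cleaner but less elementary route would be to invoke the Abel--Polya reduction identity $\wr(f_1, \dots, f_n) = f_1^n \, \wr\bigl((f_2/f_1)', \dots, (f_n/f_1)'\bigr)$ and specialise it to $f_1 = f$, $f_i = fg_{i-1}$; this disposes of the proposition in one stroke, at the cost of needing the reduction identity, whose proof is of essentially the same flavour as the row-reduction outlined above.
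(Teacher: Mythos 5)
The paper does not actually prove Proposition~\ref{Prop1}: it imports the identity from Larsen \cite{Larsen_1990}, so you are supplying a proof where the paper gives none. Your Leibniz-plus-row-reduction argument is essentially the classical one and the combinatorial bookkeeping you flag as the main obstacle does close up: subtracting $\binom{k-1}{\ell}f^{(\ell)}$ times the already-reduced row $k-\ell$ (whose entries are $g_j^{(k-1-\ell)}$, with a leading $1$ only when $k-\ell=1$) removes exactly the subleading Leibniz terms together with the first-column entry $f^{(k-1)}$, and it cannot reintroduce anything into earlier rows because only row $k$ is ever modified. The one point you must state precisely is what the residual $(n-1)\times(n-1)$ determinant is. After your reduction the surviving rows carry the entries $g_j^{(i)}$ for $i=1,\dots,n-1$, so cofactor expansion along the first column yields $f^n\,\wr\{g_1',\dots,g_{n-1}'\}$ --- the Wronskian of the \emph{derivatives} --- not $f^n\,\wr\{g_1,\dots,g_{n-1}\}$ as the Proposition is literally printed. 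The unprimed version is false: already for $n=2$ one has $\wr\{f,fg\}=f^2g'\neq f^2g$ in general. The primed version is the one the paper actually uses downstream, see \eqref{Prop1:eqn1}, and it is also what your alternative route through the reduction identity $\wr(f_1,\dots,f_n)=f_1^n\,\wr\bigl((f_2/f_1)',\dots,(f_n/f_1)'\bigr)$ produces automatically. So your argument establishes the statement in the form in which it is needed; just do not let the final step silently identify $\wr\{g_1',\dots,g_{n-1}'\}$ with the unprimed right-hand side of the Proposition as displayed.
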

Note that the above formula can be stated in terms of our notation as follows: 
    \begin{equation}\label{Prop1:eqn1}
  \det \big(\mathbf V_n(f),\mathbf V_n(fg_1),\dots \mathbf V_n(fg_{n-1})   \big)=f^n \det( \mathbf V_{n-1}(g'_1),\dots,\mathbf V_{n-1}(g'_{n-1}))
\end{equation}
We use this result to establish the following lemma:
\begin{lemma}\label{Lemma1}
Equation \eqref{eqn:Wkn_thm1} holds for $k=1$, that is 
    \begin{equation}\label{eqn:W1n}
      W^{(1)}_{n}(x)=(-2)^{\frac{n(n-1)}{2}} \sin^{\frac{n(n+1)}{2}}(x) G(n+1)2n \cos(x), 
    \end{equation}
    for all $n\ge 1$. 
\end{lemma}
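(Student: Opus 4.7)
My plan is to apply Proposition~\ref{Prop1} to extract the predicted power of $\sin(x)$, change variables to $y=\cos(x)$ to reduce to a Wronskian of polynomials, and then evaluate that polynomial Wronskian by degree and parity considerations together with a leading-coefficient calculation.

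First I would use the standard identity $\sin(mx)=\sin(x)\,U_{m-1}(\cos x)$, where $U_m$ is the Chebyshev polynomial of the second kind, so that each of the $n$ functions appearing in $W^{(1)}_n(x)$ has $\sin(x)$ as a common factor. Applying Proposition~\ref{Prop1} with $f=\sin(x)$, $g_i=U_i(\cos x)$ for $1\le i\le n-2$, and $g_{n-1}=U_n(\cos x)$ gives
\begin{equation*}
W^{(1)}_n(x)=\sin^n(x)\,\wr\{g_1'(x),\ldots,g_{n-1}'(x)\}.
\end{equation*}
Each $g_i'(x)=-\sin(x)\,U_{m_i}'(\cos x)$ again carries the common factor $-\sin(x)$, which extracts from the $n-1$ columns as $(-\sin x)^{n-1}$. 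I would then convert the remaining Wronskian (in $x$) to one in $y=\cos(x)$ via the chain-rule identity
\begin{equation*}
\wr\{P_1(\cos x),\ldots,P_N(\cos x)\}=(-\sin x)^{N(N-1)/2}\cdot\wr\{P_1(y),\ldots,P_N(y)\}\big|_{y=\cos x},
\end{equation*}
which follows by systematically eliminating lower-order derivative terms row by row. Collecting all powers of $-\sin(x)$ produces the factor $(-1)^{n(n-1)/2}\sin^{n(n+1)/2}(x)$ already appearing in the statement, multiplied by a polynomial Wronskian in $y$.

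It then remains to show
\begin{equation*}
\wr\{U_1'(y),U_2'(y),\ldots,U_{n-2}'(y),U_n'(y)\}=2n\cdot 2^{n(n-1)/2}\,G(n+1)\cdot y,
\end{equation*}
which makes the Gegenbauer polynomial $C^{(n)}_{1}(y)=2ny$ visible. Since $U_m'(y)$ has degree $m-1$, the entries here have strictly increasing degrees $0,1,\ldots,n-3,n-1$, and a standard degree count shows this Wronskian is a polynomial of degree exactly $1$; a parity argument (using that $U_m'$ has $y$-parity $(-1)^{m-1}$ and that a Wronskian of $N$ functions with parities $\epsilon_i$ has parity $(-1)^{N(N-1)/2}\prod_i\epsilon_i$) shows it is odd, hence equals a constant times $y$. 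The constant is the leading coefficient, computed from the monomial formula $\wr\{y^{d_1},\ldots,y^{d_N}\}=y^{\sum d_i-N(N-1)/2}\prod_{i<j}(d_j-d_i)$ together with the leading coefficient $m\cdot 2^m$ of $U_m'$.

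The main obstacle will be the bookkeeping in this last step: the Vandermonde-type product runs over the non-consecutive set of degrees $\{0,1,\ldots,n-3,n-1\}$, and the simplification uses $G(n+1)/G(n-1)=(n-2)!(n-1)!$ to telescope the factorials and produce precisely the factor of $n$ that accounts for the $2n$ on the right-hand side.
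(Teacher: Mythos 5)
Your argument is correct, and it reaches \eqref{eqn:W1n} by a genuinely different route from the paper's. The paper also starts from $\sin(mx)=\sin(x)U_{m-1}(\cos x)$, but it then uses multilinearity to discard all lower-order Chebyshev terms (their columns being linear combinations of the earlier ones) and applies Proposition~\ref{Prop1} to obtain the one-step recursion $W^{(1)}_{n}(x)=(-2)^{n-1}n\,(n-2)!\,\sin^n(x)\,W^{(1)}_{n-1}(x)$, from which \eqref{eqn:W1n} follows by induction on $n$ starting at $W^{(1)}_1=\sin(2x)$. You instead apply Proposition~\ref{Prop1} once, strip the remaining common factor $-\sin(x)$ via $\wr\{fh_1,\dots,fh_N\}=f^N\wr\{h_1,\dots,h_N\}$, pass to $y=\cos x$ with the change-of-variables identity $\wr\{P_1(u(x)),\dots,P_N(u(x))\}=(u'(x))^{N(N-1)/2}\,\wr\{P_1,\dots,P_N\}\big|_{y=u(x)}$, and evaluate the resulting polynomial Wronskian in closed form by degree count, parity, and the Vandermonde leading-coefficient formula. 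I verified the bookkeeping: the powers of $\sin$ add up as $n+(n-1)+\tfrac{(n-1)(n-2)}{2}=\tfrac{n(n+1)}{2}$ with sign $(-1)^{n(n-1)/2}$, and the leading coefficient $\bigl(\prod_{m}m\,2^{m}\bigr)\prod_{i<j}(d_j-d_i)$ over degrees $\{0,1,\dots,n-3,n-1\}$ does simplify to $2n\cdot 2^{n(n-1)/2}G(n+1)$, so the identity you reduce to is true. Your route is non-inductive and makes the factor $C^{(n)}_1(y)=2ny$ visible at once, which is attractive; its limitation is that degree, parity, and leading coefficient pin down only a degree-one odd polynomial, so unlike the paper's recursion technique (which is reused in Lemma~\ref{Lemma2} for the step in $k$) this method would not extend to general $k$ without substantial additional input.
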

\begin{proof}  
We recall that the Chebyshev polynomial of the second kind $U_n(x)$, satisfies 
\begin{equation}\label{eq_Un}
\sin ((n+1)x)=\sin(x)U_n(\cos(x)),
\end{equation}
see formula (8.940.2) from \cite{Gradshteyn2007}. To simplify the notation in the rest of the proof, we will denote 
\begin{equation}\label{def_a_b}
a=a(x):=\sin(x), \;\;\;\; b=b(x):=\cos(x). 
\end{equation}
Thus, $W^{(1)}_{n}(x)=\wr \{ \sin(x),\sin(2x),\dots,\sin((n-1)x),\sin((n+1)x) \}$ can be written as
$$
W^{(1)}_{n}(x)=\det \big(\mathbf V_n(a),\mathbf V_n(aU_1(b)),\dots,\mathbf V_n(aU_{n-2}(b)),\mathbf V_n(aU_{n}(b)) \big). 
$$
The Chebyshev polynomial $U_n(b)$ is an $n^\text{th}$-order polynomial containing only terms of the form $b^{n-2m}$, and its leading coefficient is $2^n$. Writing the determinant $W^{(1)}_{n}(x)$ as a sum of determinants whose last column equals $\mathbf V_n(a b^{n-2m})$, we see that each term (except for the one with $m=0$) vanishes because the columns are linearly dependent. Thus, we can simplify the above formula as
\begin{equation}\label{omit}
W^{(1)}_{n}(x)=\det \big( \mathbf V_n(a),\mathbf V_n(a(2b)),\dots,\mathbf V_n(a(2^{n-2}b^{n-2})),\mathbf V_n(a(2^nb^n)) \big).
\end{equation}
Applying Proposition \ref{Prop1:eqn1} we obtain
\begin{align*}
W^{(1)}_{n}(x)&=\det \big( \mathbf V_{n-1}(-2a), \dots,\mathbf V_{n-1}(-2(n-2)a(2^{n-3} b^{n-3})),\mathbf V_{n-1}(-2n a(2^{n-1} b^{n-1}))\big)\\
&=(-2)^{n-1}n(n-2)! a^n W^{(1)}_{n-1}(x) .
\end{align*}
This gives us a recursion formula for $W_n^{(1)}$. Since we know the inital value $W_1^{(1)}=\sin(2x)$, formula \eqref{eqn:W1n} can now be easily obtained by induction.
\end{proof}  
Next, we establish the following recursion relation.  
\begin{lemma}\label{Lemma2}
   For $n\ge 2$ and $k\ge 0$, we have
    \begin{equation}\label{eqn:Wnk_recurrence}
      W^{(k+2)}_{n}(x)=W^{(k)}_{n}(x)+(-2)^{n-1}(n+k+1)(n-2)!\sin^n(x)W^{(k+2)}_{n-1}. 
    \end{equation}
\end{lemma}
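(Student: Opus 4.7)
The plan is to reduce $W_n^{(k+2)} - W_n^{(k)}$ to a Wronskian whose last column is $\sin((n+k+1)x)$, and then evaluate that Wronskian by expanding derivatives of $U_m(\cos x)$ as explicit sums of sines and exploiting multilinearity. To start, I use linearity of the determinant in the last column together with the sum-to-product identity
$$\sin((n+k+2)x) - \sin((n+k)x) = \frac{2\sin(x)}{n+k+1}\bigl[\sin((n+k+1)x)\bigr]',$$
so that every column of the resulting Wronskian is divisible by $\sin(x)$ (the first $n-1$ via $\sin(mx) = \sin(x)\,U_{m-1}(\cos x)$, and the last by construction). Applying Proposition~\ref{Prop1} with $f = \sin x$ and simplifying $[(n+k+1)\cos((n+k+1)x)]' = -(n+k+1)^2\sin((n+k+1)x)$ yields
$$W_n^{(k+2)} - W_n^{(k)} = -2(n+k+1)\sin^n(x)\,\wr\bigl\{(U_1(\cos x))', \dots, (U_{n-2}(\cos x))', \sin((n+k+1)x)\bigr\}.$$

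Next I use the explicit expansion, obtained by iterating the identity $U_m(b) - U_{m-2}(b) = 2T_m(b)$ starting from $U_0 = 1$ and $U_1 = 2b$,
$$\frac{d}{dx}U_m(\cos x) = -2\sum_{\substack{1 \le j \le m\\ j \equiv m \pmod{2}}} j\,\sin(jx).$$
Substituting this into each of the first $n-2$ columns of the remaining Wronskian and using multilinearity expresses it as the sum
$$(-2)^{n-2}\sum_{(j_1,\dots,j_{n-2})} j_1 \cdots j_{n-2}\,\wr\{\sin(j_1 x), \dots, \sin(j_{n-2}x), \sin((n+k+1)x)\},$$
indexed by tuples with $1 \le j_l \le l$ and $j_l \equiv l \pmod{2}$.

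The main step of the argument, and what I expect to be the only non-mechanical part, is the collapse of this sum to a single term: each summand vanishes unless $j_1, \dots, j_{n-2}, n+k+1$ are pairwise distinct, and the inequality $n+k+1 > n-2 \ge j_l$ handles the last entry automatically. A short induction on $l$ ($j_1 = 1$ is forced, then $j_2 = 2$, then $j_3 \in \{1,3\}$ must be $3$ since $1$ is already used, and so on) shows the unique admissible tuple is $(1, 2, \dots, n-2)$, contributing $(-2)^{n-2}(n-2)!\,W_{n-1}^{(k+2)}$. Multiplying by the prefactor $-2(n+k+1)\sin^n(x)$ then gives the claimed identity $(-2)^{n-1}(n+k+1)(n-2)!\sin^n(x)\,W_{n-1}^{(k+2)}$; the rest is bookkeeping.
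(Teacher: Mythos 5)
Your proof is correct, and it shares the same skeleton as the paper's --- both reduce the problem to a Wronskian whose last column is a multiple of $\sin(x)\cos((n+k+1)x)$ and then apply Proposition~\ref{Prop1} exactly once --- but the two key steps are executed differently. The paper never forms the difference $W_n^{(k+2)}-W_n^{(k)}$ directly; instead it splits the last column via the Chebyshev three-term recurrence $U_{n+k+1}(b)=-U_{n+k-1}(b)+2b\,U_{n+k}(b)$, producing $-W_n^{(k)}$ and $+2W_n^{(k)}$ as separate pieces, which is just the product-to-sum form of your sum-to-product identity. The more substantive divergence is in identifying the reduced Wronskian $\wr\{(U_1(\cos x))',\dots,(U_{n-2}(\cos x))',\sin((n+k+1)x)\}$ with $(-2)^{n-2}(n-2)!\,W_{n-1}^{(k+2)}$: the paper replaces each $U_j(\cos x)$ by its leading monomial $2^j\cos^j(x)$ via column operations, differentiates, and then reinstates the discarded lower-order terms to reassemble $W_{n-1}^{(k+2)}$, whereas you expand $\frac{d}{dx}U_m(\cos x)=-2\sum j\sin(jx)$ (summing over $j\equiv m \bmod 2$) and collapse the multilinear expansion by a distinctness argument: $j_l\le l$ with all entries pairwise distinct forces $j_l=l$, and $n+k+1>n-2$ keeps the last column out of the way. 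Both expansions of $U_m$ (in monomials versus in cosines/sines) are legitimate, and your pigeonhole collapse is more explicit and self-contained than the paper's rather terse ``add back the lower-order terms'' step, at the cost of a slightly longer combinatorial detour; the prefactor bookkeeping checks out in both versions.
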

\begin{proof}
Using \eqref{eq_Un} and \eqref{def_a_b},we write 
$$
 W^{(k+2)}_{n}(x)=\det \big(\mathbf V_n(a), \mathbf V_n(aU_1(b)),\dots, \mathbf V_n(aU_{n-2}(b)), \mathbf V_n(aU_{n+k+1}(b))\big).
$$
Chebyshev polynomials satisfy the following three-term recurrence relation (see formula (8.940.2) in \cite{Gradshteyn2007}):
$$
U_{n+k+1}(b) = -U_{n+k-1}(b) + 2b\,U_{n+k}(b).
$$
Thus, we can write $W^{(k+2)}_{n}(x) = A + B$ as the sum of two determinants:
$$
A= \det \big( \mathbf V_n(a), \mathbf V_n(a(U_1(b)),\dots, \mathbf V_n(aU_{n-2}(b)), \mathbf V_n(-aU_{n+k-1}(b))\big)=-W_n^{(k)},
$$
and 
\begin{align*}
B&=\det \big( \mathbf V_n(a), \mathbf V_n(a(U_1(b)),\dots, \mathbf V_n(aU_{n-2}(b)), \mathbf V_n(2abU_{n+k}(b))\big)\\
&=\det \big( \mathbf V_n(a), \mathbf V_n(a(U_1(b)),\dots, \mathbf V_n(aU_{n-2}(b)), \mathbf V_n(2b\sin((n+k+1)x) )\big)\\
&=\det \big(\mathbf V_n(a), \mathbf V_n(a(U_1(b)),\dots, \mathbf V_n(aU_{n-2}(b)), \mathbf V_n(2\sin((n+k)x) )\big)\\
&+\det \big( \mathbf V_n(a), \mathbf V_n(a(U_1(b)),\dots, \mathbf V_n(aU_{n-2}(b)), \mathbf V_n(2a\cos((n+k+1)x) )\big) \\
&=2W_n^{(k)}+2\det \big( \mathbf V_n(a), \mathbf V_n(a(U_1(b)),\dots, \mathbf V_n(aU_{n-2}(b)), \mathbf V_n(2a\cos((n+k+1)x) )\big)
\end{align*}
In the second step of simplifying the expression for $B$, we used the identity
$$
\sin((n+k)x)=\sin((n+k+1)\cos(x)-\cos((n+k+1)x)\sin(x)
$$
We now use \eqref{Prop1:eqn1} for the second term of $B$ and obtain
\begin{align*}
B&=2W_n^{(k)}(x)+2a^n \det \big(\mathbf V_{n-1}(\partial_x (U_1(b)),\dots, \mathbf V_{n-1}(\partial_x (U_{n-2}(b)), \mathbf V_{n-1}(\partial_x (\cos((n+k+1)x)) \big)\\
&=2W_n^{(k)}(x)+2a^n \det \big(\mathbf V_{n-1}(\partial_x (2b)),\dots, \mathbf V_{n-1}(\partial_x (2^{n-2}b^{n-2}), \mathbf V_{n-1}(\partial_x (\cos((n+k+1)x)) \big)\\
&=2W_n^{(k)}(x)+2a^n \det \big(\mathbf V_{n-1}(-2a), \dots \\
& \qquad \dots,\mathbf V_{n-1}(-2(n-2)a(2^{n-3}b^{n-3})),\mathbf V_{n-1}(-(n+k+1)\sin((n+k+1)x)) \big)\\
&=2W_n^{(k)}(x)+(-2)^{n-1}(n+k+1)(n-2)! a^n W_{n-1}^{(k+2)}(x)
\end{align*}
In the second step, we applied the same idea as in deriving \eqref{omit} to remove all but the highest-order terms in $U_n(b)$. In the last step, we used the “reverse” idea of adding extra lower-order terms to recover $aU_n(b) = \sin((n+1)x)$ from $ab^n$.
Combining the above equations gives us the desired result:
    \begin{equation*}
      W^{(k+2)}_{n}(x)=A+B=W^{(k)}_{n}(x)+(-2)^{n-1}(n+k+1)(n-2)!a^{n}W^{(k+2)}_{n-1}(x)
    \end{equation*}
\end{proof}  

Now, we define
\begin{equation}\label{def:tildeWnk}
\widetilde W_n^{(k)}(x) := (-2)^{\frac{n(n-1)}{2}} \sin^{\frac{n(n+1)}{2}}(x)\,G(n+1)\,C^{(n)}_{k}\bigl(\cos(x)\bigr).
\end{equation}
Our goal is to show that \(\widetilde W_n^{(k)}\) also satisfies the recursion relation \eqref{eqn:Wnk_recurrence}.

\begin{lemma}\label{Lemma 3}
  For \(n \ge 2\) and \(k \ge 0\),
  \begin{equation}\label{eqn:tildeW_recurrence}
     \widetilde W^{(k+2)}_{n} = \widetilde W^{(k)}_{n} + (-2)^{n-1} (n+k+1)\,(n-2)!\,\sin^n(x)\,\widetilde W^{(k+2)}_{n-1}.
  \end{equation}
\end{lemma}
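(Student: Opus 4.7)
The plan is to substitute the explicit definition \eqref{def:tildeWnk} into both sides of \eqref{eqn:tildeW_recurrence}, verify that the non-polynomial prefactors line up exactly, and reduce the claim to a single algebraic identity for Gegenbauer polynomials, which I will prove with generating functions.

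With the shorthand $y=\cos(x)$, the left-hand side of \eqref{eqn:tildeW_recurrence} equals $(-2)^{n(n-1)/2}\sin^{n(n+1)/2}(x)\,G(n+1)\,C^{(n)}_{k+2}(y)$, and the first term on the right is the same expression with $C^{(n)}_{k+2}$ replaced by $C^{(n)}_{k}$. For the second term on the right I would collect the powers using the elementary identities $(n-1)+\tfrac{(n-1)(n-2)}{2}=\tfrac{n(n-1)}{2}$ and $n+\tfrac{(n-1)n}{2}=\tfrac{n(n+1)}{2}$, together with $G(n+1)=(n-1)!\,G(n)$ (immediate from the product formula for $G$ recalled in the introduction). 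Dividing through by the common factor $(-2)^{n(n-1)/2}\sin^{n(n+1)/2}(x)\,G(n)$, the claimed recurrence \eqref{eqn:tildeW_recurrence} becomes equivalent to the purely algebraic identity
\[
(n-1)\bigl[\,C^{(n)}_{k+2}(y)-C^{(n)}_{k}(y)\,\bigr]=(n+k+1)\,C^{(n-1)}_{k+2}(y).
\]

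To prove this identity, I would use the generating function $F_\alpha(t):=(1-2yt+t^2)^{-\alpha}=\sum_{m\ge 0}C^{(\alpha)}_m(y)\,t^m$. The algebraic decomposition $1-t^2=(1-2yt+t^2)-2t(t-y)$ gives $(1-t^2)F_n(t)=F_{n-1}(t)-2t(t-y)F_n(t)$. Differentiating the generating function yields $F'_{n-1}(t)=2(n-1)(y-t)F_n(t)$, so that $-2t(t-y)F_n(t)=\tfrac{t}{n-1}F'_{n-1}(t)$, and hence
\[
(1-t^2)F_n(t)=F_{n-1}(t)+\frac{t}{n-1}\,F'_{n-1}(t).
\]
Extracting the coefficient of $t^{k+2}$ on both sides produces exactly the displayed identity, since the coefficient of $t^{k+2}$ in $t\,F'_{n-1}(t)$ is $(k+2)\,C^{(n-1)}_{k+2}(y)$, and $1+\tfrac{k+2}{n-1}=\tfrac{n+k+1}{n-1}$.

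The main obstacle is recognising that the desired identity is a (somewhat non-standard) Gegenbauer contiguity relation mixing orders $\alpha$ and $\alpha-1$, rather than the more familiar derivative-type identity $\tfrac{d}{dx}C^{(\alpha)}_k(x)=2\alpha\,C^{(\alpha+1)}_{k-1}(x)$. Once this identity is identified and the generating-function argument is set up, the rest of the proof is a routine bookkeeping of prefactors and poses no real difficulty.
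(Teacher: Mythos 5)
Your proof is correct. The reduction of \eqref{eqn:tildeW_recurrence} to the single polynomial identity $(n-1)\bigl[C^{(n)}_{k+2}(y)-C^{(n)}_{k}(y)\bigr]=(n+k+1)\,C^{(n-1)}_{k+2}(y)$ is exactly the paper's reduction (this is \eqref{eqn:Cnk_recurrence}), and your prefactor bookkeeping --- the exponent identities and $G(n+1)=(n-1)!\,G(n)$ --- matches what the paper does implicitly. Where you diverge is in how the Gegenbauer identity itself is established: the paper simply quotes the two contiguity relations (8.933.1) and (8.933.2) from Gradshteyn--Ryzhik, adds them to get $(n+k)C^{(n)}_k=n\bigl[C^{(n+1)}_k-C^{(n+1)}_{k-2}\bigr]$, and reindexes; you instead derive the identity from scratch via the generating function $F_\alpha(t)=(1-2yt+t^2)^{-\alpha}$, using the decomposition $1-t^2=(1-2yt+t^2)-2t(t-y)$ and the relation $F'_{n-1}(t)=2(n-1)(y-t)F_n(t)$ to obtain $(1-t^2)F_n=F_{n-1}+\tfrac{t}{n-1}F'_{n-1}$ and then comparing coefficients of $t^{k+2}$. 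I have checked each step and the coefficient extraction; it is sound (the division by $n-1$ is legitimate since $n\ge 2$). Your route is self-contained and arguably more illuminating, at the mild cost of implicitly invoking the generating-function characterization of $C^{(\alpha)}_k$ where the paper works from the differential-equation definition \eqref{eqn:ODE_Gegenbauer}; the paper's route is shorter but rests entirely on two table look-ups. One cosmetic point: dividing by $\sin^{n(n+1)/2}(x)$ is only valid away from the zeros of $\sin$, but since both sides of \eqref{eqn:tildeW_recurrence} are continuous this is harmless and not worth flagging in the write-up.
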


\begin{proof}
To prove \eqref{eqn:tildeW_recurrence}, it suffices to show that
\begin{equation}\label{eqn:Cnk_recurrence}
C^{(n)}_{k+2}(\cos(x)) = C^{(n)}_{k}(\cos(x))+ \frac{n+k+1}{n-1}\,C^{(n-1)}_{k+2}(\cos(x)).
\end{equation}
According to formulas (8.933.1) and (8.933.2) from \cite{Gradshteyn2007}, the Gegenbauer polynomials satisfy
$$
k\,C_k^{(n)}(\cos(x))= 2n\bigl[\cos(x)\,C_{k-1}^{(n+1)}(\cos(x))- C_{k-2}^{(n+1)}(\cos(x))\bigr]
$$
and
$$
(2n + k)\,C_k^{(n)}(\cos(x))= 2n\bigl[C_{k}^{(n+1)}(\cos(x))- \cos(x)\,C_{k-1}^{(n+1)}(\cos(x))\bigr].
$$
By adding these two equations together and then replacing \(k\) with \(k+2\) and \(n\) with \(n-1\), we get
\begin{equation*}
2\,(n + k + 1)\,C_k^{(n)}(\cos(x))= 2\,(n - 1)\bigl[C_{k+2}^{(n)}(\cos(x))- C_{k}^{(n)}(\cos(x))\bigr],
\end{equation*}
which is precisely \eqref{eqn:Cnk_recurrence}.
\end{proof}

\noindent
{\it Proof of Theorem \ref{eqn:Wkn_thm1}:} 
We need to show that $\widetilde W^{(k)}_{n} = W^{(k)}_n$ for all $n \ge 1$ and $k \ge 0$. This result is clearly true when $n = 1$ (for all $k \ge 0$) and when $k = 0$ (for all $n \ge 1$); see \eqref{W_n^0}. It is also true when $k = 1$ (for all $n \ge 1$) due to \eqref{eqn:W1n}. We claim that these facts, combined with the recurrence relations \eqref{eqn:Wnk_recurrence} and \eqref{eqn:Cnk_recurrence}, imply that \eqref{eqn:Wkn_thm1} holds for all $n \ge 1$ and $k \ge 0$.

\begin{figure}[t]
\centering
\includegraphics[width=10cm, height=8cm]{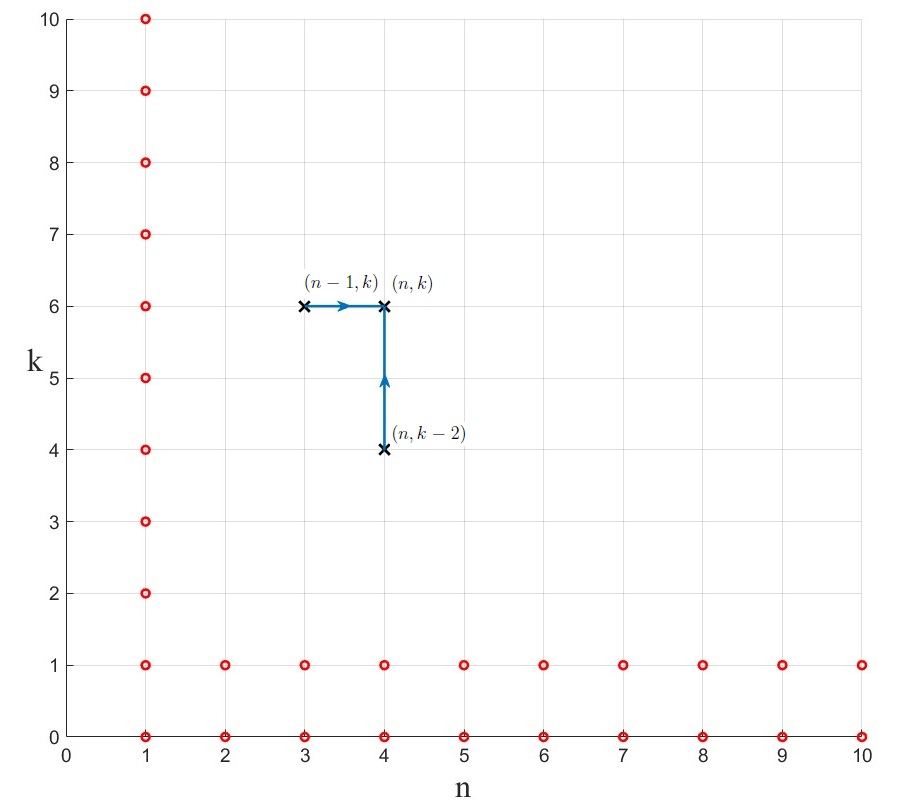}
\caption{Computing $W^{(k)}_{n}$ and $\widetilde 
W^{(k)}_{n}$ via recurrence relations 
\eqref{eqn:Wnk_recurrence} and \eqref{eqn:Cnk_recurrence}.}
\label{fig1}
\end{figure}

The easiest way to prove this is by induction. Given the values of $W^{(k)}_n$ that we already know (indicated by red points at coordinate $(n,k)$ in Figure \ref{fig1}), we can apply the recurrence \eqref{eqn:Wnk_recurrence} and uniquely determine the values of $W^{(k)}_n$ for $k = 3$, then for $k = 4$, and so on. Since $\widetilde W^{(k)}_n$ satisfies the same recurrence relation as $W^{(k)}_n$ and shares the same “initial conditions” (meaning the values for $k = 0, 1$ and $n = 1$), we must have $W^{(k)}_n = \widetilde W^{(k)}_n$ for all $n \ge 1$ and $k \ge 0$.
\qed

In the next section, we will establish the identity \eqref{eqn:Wkn_thm1} (up to a $\pm$ sign) using entirely different ideas. In particular, we will show that the appearance of Gegenbauer polynomials in \eqref{eqn:Wkn_thm1} is not surprising, since the determinants $W^{(k)}_n$ arise when performing a Darboux--Crum transformation and studying the resulting Sturm--Liouville problem.

\section{Computing Wronskians using the Darboux transformation}\label{proof-2}

For $\mu \ge 0$, we consider a second-order linear differential operator
\begin{equation}\label{def:L_mu}
\L_{\mu}:=\frac{1}{2} \partial_x^2 - \mu \csc^2(\pi x),
\end{equation}
acting on twice continuously differentiable functions \(f: (0,1) \to \mathbb{R}\). 
If \(\mu=0\), we impose the Dirichlet boundary conditions \(f(0) = f(1) = 0\). If \(\mu > 0\), boundary conditions are unnecessary, since both boundary points \(0\) and \(1\) are \emph{natural} for the operator \(\L_{\mu}\) in the language of Feller boundary classification 
(see \cite{Borodin2002} [Section II.6]). 
Our first goal is to find a complete eigenbasis \(\{f_k^{(\mu)}\}_{k \ge 0}\) of \(L_2((0,1), \mathrm{d}x)\). When \(\mu=0\), it is well known that the functions
\[
f_k^{(0)}(x) = \sin\bigl(\pi (k+1)\,x\bigr), \quad k \ge 0,
\]
provide an orthogonal basis for functions in \(L_2((0,1), \mathrm{d}x)\) that satisfy the Dirichlet boundary conditions \(f(0)=f(1)=0\).
To determine the eigenbasis for general \(\mu > 0\), we need to recall some results about the Gegenbauer polynomials \(C_k^{(\mu)}(u)\). These polynomials 
can be defined as the solutions to the Gegenbauer differential equation (see \cite{Suetin_2001}).

\begin{align}\label{eqn:ODE_Gegenbauer}
(1-u^2)y''(u)-(2\mu+1)uy'(u)+k(k+2\mu)y(u)=0,
\end{align}
which satisfy 
\begin{equation}\label{eqn:orthogonality_Ckmu}
\int_{-1}^1 C_k^{(\mu)}(t) C_l^{(\mu)}(t)(1-t^2)^{\mu-\frac{1}{2}} \d t=\frac{\pi 2^{1-2\mu}\Gamma(k+2\mu)}{k!(k+\mu)[\Gamma(\mu)]^2} \delta_{k,l}, \;\;\; k,l \ge 0,
\end{equation}
see formula (8.939.8) from \cite{Gradshteyn2007}. The Gegenbauer polynomials form an orthogonal basis of  \\ $L_2((-1,1), (1-t^2)^{\mu-1/2} \d t$. 
Armed with these results, we can state  the following 

\begin{proposition}\label{Prop2}
Denote 
\begin{equation}\label{def_nu}
\nu:=\frac{1}{2} \big( 1 + \sqrt{1+8\mu/\pi^2} \big)
\end{equation}
and
\begin{equation*}
f^{(\mu)}_k(x):=2^{\nu-1}\Gamma(\nu)\sin^{\nu}(\pi x)C_k^{(\nu)}(\cos (\pi x)), \;\;\; k \ge 0. 
\end{equation*}
Then for all $k\ge 0$ and $x \in (0,1)$ 
\begin{equation*}
\L_{\mu} f^{(\mu)}_k(x) = -\frac{1}{2}\pi^2(\nu+k)^2 f^{(\mu)}_k(x).
\end{equation*}
The functions $\{f_k^{(\mu)}\}_{k\ge 0}$ form a complete orthogonal basis of $L_2((0,1), \d x)$ and they satisfy
\begin{equation}\label{eqn:normf}
\int_{0}^1  f^{(\mu)}_k(x)^2 \d x=\frac{\Gamma(2\nu+k)}{2(\nu+k)k!}. 
\end{equation}
\end{proposition}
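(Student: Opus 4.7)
The plan is to translate everything to the Gegenbauer picture via the substitution $u=\cos(\pi x)$: verify the eigenvalue identity by a chain-rule calculation, and then read off orthogonality, completeness, and the norm formula from the corresponding known properties of $C_k^{(\nu)}$.

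First I would record the parameter identity implicit in \eqref{def_nu}: squaring $2\nu-1=\sqrt{1+8\mu/\pi^2}$ yields
\begin{equation*}
\mu=\tfrac{\pi^2}{2}\,\nu(\nu-1).
\end{equation*}
This is exactly what is needed so that the $\mu\csc^2(\pi x)$ potential of $\L_\mu$ absorbs the boundary-behavior term produced by differentiating $\sin^\nu(\pi x)$ twice. Next, for the eigenvalue identity, I would set $g(x)=\sin^\nu(\pi x)\,y(\cos(\pi x))$ for an arbitrary smooth $y$ on $(-1,1)$, compute $\tfrac12 g''(x)$ by the chain rule, and regroup. Writing $s=\sin(\pi x)$, $c=\cos(\pi x)$ and using $c^2=1-s^2$, the derivatives produce exactly one $\nu(\nu-1)s^{\nu-2}y(c)$ term whose coefficient cancels against $-\mu\csc^2(\pi x)g$, leaving
\begin{equation*}
\L_\mu g(x)=\tfrac{\pi^2}{2}\sin^\nu(\pi x)\bigl[(1-u^2)y''(u)-(2\nu+1)u\,y'(u)\bigr]\Big|_{u=\cos(\pi x)}-\tfrac{\pi^2\nu^2}{2}\,g(x).
\end{equation*}
Specializing to $y=C_k^{(\nu)}$ and invoking the Gegenbauer ODE \eqref{eqn:ODE_Gegenbauer} replaces the bracket by $-k(k+2\nu)\,y(u)$, so the eigenvalue is $-\tfrac{\pi^2}{2}\bigl(k(k+2\nu)+\nu^2\bigr)=-\tfrac{\pi^2}{2}(\nu+k)^2$, as claimed.

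For orthogonality, completeness, and the norm, I would observe that $t=\cos(\pi x)$ is a bijection $(0,1)\to(-1,1)$ with $(1-t^2)^{\nu-1/2}|\d t|=\pi\sin^{2\nu}(\pi x)\,\d x$. Consequently the map
\begin{equation*}
(Uh)(x):=\sqrt{\pi}\,\sin^\nu(\pi x)\,h(\cos(\pi x))
\end{equation*}
is an isometric isomorphism from $L_2\bigl((-1,1),(1-t^2)^{\nu-1/2}\,\d t\bigr)$ onto $L_2((0,1),\d x)$. The orthogonality relation \eqref{eqn:orthogonality_Ckmu} and the completeness of $\{C_k^{(\nu)}\}_{k\ge 0}$ in the source space therefore transfer through $U$, so $\{U(C_k^{(\nu)})\}_{k\ge 0}$ — and hence $\{f_k^{(\mu)}\}_{k\ge 0}$, which differs only by the positive constant $2^{\nu-1}\Gamma(\nu)/\sqrt{\pi}$ — is a complete orthogonal system in $L_2((0,1),\d x)$. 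Squaring that constant against \eqref{eqn:orthogonality_Ckmu} gives
\begin{equation*}
\int_0^1 f_k^{(\mu)}(x)^2\,\d x=\bigl(2^{\nu-1}\Gamma(\nu)\bigr)^2\cdot\frac{2^{1-2\nu}\,\Gamma(k+2\nu)}{k!(k+\nu)[\Gamma(\nu)]^2}=\frac{\Gamma(2\nu+k)}{2(\nu+k)\,k!},
\end{equation*}
which is \eqref{eqn:normf}.

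The only step requiring real care is the second-derivative bookkeeping in the eigenvalue computation, and even that is mechanical once $\mu=\tfrac{\pi^2}{2}\nu(\nu-1)$ is in hand; the orthogonality, completeness, and norm claims are then essentially immediate consequences of transporting the classical Gegenbauer theory through the unitary $U$.
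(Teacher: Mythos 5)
Your proposal is correct, but it is worth noting that the paper does not actually write out a proof of Proposition~\ref{Prop2}: it simply asserts that the result follows from \cite{Dong2014} by the change of variables $x=u-1/2$, and then remarks that ``one could also prove it directly by using the equations \eqref{eqn:ODE_Gegenbauer} and \eqref{eqn:orthogonality_Ckmu}.'' What you have written is precisely that direct proof, carried out in full. Your computation checks out at every step: squaring \eqref{def_nu} does give $\mu=\tfrac{\pi^2}{2}\nu(\nu-1)$, which is exactly the coefficient of the $s^{\nu-2}y(c)$ term produced by differentiating $\sin^\nu(\pi x)$ twice, so the singular parts cancel; the remaining bracket is the left-hand side of the Gegenbauer ODE \eqref{eqn:ODE_Gegenbauer} with parameter $\nu$, and $k(k+2\nu)+\nu^2=(\nu+k)^2$ gives the stated eigenvalue. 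The isometry $U$ is set up correctly (the factor $\pi$ from $|\d t|=\pi\sin(\pi x)\,\d x$ cancels against the $\pi$ in the numerator of \eqref{eqn:orthogonality_Ckmu}, yielding \eqref{eqn:normf} exactly), and transporting completeness of $\{C_k^{(\nu)}\}$ from the weighted space---which the paper states just before the proposition---is legitimate since $\nu\ge 1$ here. Compared with the paper's citation-based argument, your route is self-contained and makes transparent why the parameter $\nu$ must be tied to $\mu$ by \eqref{def_nu}; the paper's approach buys brevity at the cost of outsourcing the verification to a reference in a different normalization.
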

This result follows from \cite{Dong2014} by change of variables $x=u-1/2$. One could also prove it directly by using the equations \eqref{eqn:ODE_Gegenbauer} and \eqref{eqn:orthogonality_Ckmu}.

%
%
%\begin{proof}
%Consider he Gegenbauer Differential equation (see \cite{Suetin_2001}):
%\begin{align*}
%(1-u^2)y''(u)-(2n+1)uy'(u)+k(k+2n)y(u)=0
%\end{align*}
%
%It has particular solutions $C_k^{(n)}(u)$, known as Gegenbauer polynomials.
%
%Replace $u$ by $\cos(\pi x)$, then the original equation can be transfer to:
%\begin{equation*}
%\frac{\d^2 y}{\d x^2}+2n \pi \cot(\pi x) \frac{\d y}{\d x}+\pi^2k(k+2n)y=0
%\end{equation*}
%Again let $y=[C \sin^n(\pi x)]^{-1}z$, then it is equivalent to
%\begin{align*}
%\frac{1}{2}\frac{\d^2 z}{\d x^2}-\frac{n(n-1)}{2}\pi^2 \csc^2(\pi x)z=-\frac{1}{2}\pi^2(n+k)^2 z
%\end{align*}
%
%This actully shows that $f^{(n)}_k(x)$ is the eigenfunctions of $\L$ with eigenvalue $-\frac{1}{2}\pi^2(n+k)^2$.

%
%We next shows $f^{(n)}_k(x), k\in Z^+$ are orthogonal in $L_2(0,1)$.
%Use the orthogonality realtion of Gegenbauer polynomials, see \cite{Gradshteyn2007} p993:
%\begin{equation}
%\int_{-1}^1 C_k^n(t) C_l^n(t)(1-t^2)^{n-\frac{1}{2}} \d t=\frac{\pi 2^{1-2n}\Gamma(k+2n)}{k!(k+n)[\Gamma(n)]^2} \delta_{k,l}
%\end{equation}
%By transfer $t=\cos(\pi x)$, and take $C=2^{n-1}\Gamma(n)$, then we get:
%\begin{equation}\label{eqn:normf}
%\int_{0}^1 f^{(n)}_k(x) f^{(n)}_l(x) \d x=\frac{\Gamma(k+2n)}{k!2(k+n)}\delta_{k,l}
%\end{equation}
%So we proof that ${f^{(n)}_k(x)}$ are orthogonal in $L_2(0,1)$.Since when fixed $n$, Gegenbauer polynomials $C_k^{(n)}$ is complete in $L_2(0,1)$, $f_k^{(n)}$ is also complete.
%\end{proof}
%According to (\cite{Vadim_2007}) Section 6 p244 , we have:

\vspace{0.25cm}

\noindent 
{\bf Remark 1.}
The above result allows us to write down a spectral expansion of the transition probability density of  diffusion process $X$, which is defined as a Brownian motion on the interval $(0,1)$ killed  at rate $\mu \csc^2(\pi x)$. When $\mu=0$ and we impose Dirichlet boundary conditions, this process is just the Brownian motion on $(0,1)$ killed at the first time that it hits the boundary of the interval (see page 122 in  \cite{Borodin2002}). When $\mu>0$, it follows from \cite{Borodin2002}[Section II.6] (see also \cite{KY_2024}) that both boundaries are natural for $X$ and its dynamics is defined solely by its Markov infinitesimal generator $\L_{\mu}$. For $\mu>0$ the transition probability density function of $X$ is given by
\begin{equation}\label{eqn:p_txy}
p_t(x,y)=\sum_{k=0}^{+\infty}e^{-\frac{\pi^2 t}{2}(\nu+k)^2}f^{(\mu)}_k(x)f^{(\mu)}_k(y) \frac{2(\nu+k)k!}{\Gamma(2\nu+k)}.
\end{equation}
This result is a generalization  of formulas obtained in \cite{KY_2024}[Section 5.4].  It is also a special case of the general results on diffusion semigroups whose generators have discrete spectrum-see  equation 3.20 in  \cite{Vadim_2007}. The series in \eqref{eqn:p_txy} converges pointwise due to the asymptotic result
$$
C_k^{(\nu)}(\cos(\pi x))=\frac{\sin(\pi x)^{-\nu} 2^{\nu} (2\nu)_n}{ \sqrt{\pi n} (\nu+1/2)_n} \cos((k+\nu)\pi x- \pi \nu/2) + O(n^{-3/2}), \;\;\; n\to +\infty. 
$$
which can be derived  from formulas 
18.7.1	and 18.15.4\_5 from \cite{NIST_2010}.

%$$
%C_k^{\nu}(\cos(\pi x))=\frac{\Gamma(k+2\nu)}{\Gamma(2\nu)\Gamma(k+1)}P_k^{\nu-1/2,\nu-1/2}(\cos(\pi x))\sim \frac{1}{\sqrt{k \pi}}(\sin(\pi x))^{-\nu}2^{\nu}\frac{\Gamma(k+2\nu)}{\Gamma(2\nu)\Gamma(k+1)}
%$$
%then
%$$
%f_k^{(\mu)}(x)=2^{\nu-1}\Gamma(\nu)\sin^{\nu}(\pi x)C_k^{(\nu)}(\cos (\pi x))\sim\frac{1}{\sqrt{k \pi}}2^{2\nu-1}\frac{\Gamma(\nu)\Gamma(k+2\nu)}{\Gamma(2\nu)k!}
%$$
%
%
%then when $k$ is large,
%\begin{align*}
%e^{-\frac{\pi^2 t}{2}(\nu+k)^2}f^{(\mu)}_k(x)f^{(\mu)}_k(y) \frac{2(\nu+k)k!}{\Gamma(2\nu+k)} &\sim e^{-\frac{\pi^2 t}{2}(\nu+k)^2}\frac{2^{4\nu}}{k \pi}\frac{(k+\nu)^2\Gamma^2(\nu)\Gamma(k+2\nu)}{k!\Gamma^2(2\nu)}\\
%&\sim C(\nu) e^{-\frac{\pi^2 t}{2}(\nu+k)^2}\frac{\Gamma(k+2\nu)}{(k-1)!}
%\end{align*}
%
%This apperently converges.

\vspace{0.25cm}

We next introduce the concept of the Darboux transformation. We start with a second order linear differential operator 
 \begin{equation*}
 {\mathcal L}=\frac{1}{2} \partial_x^2 - c(x), \;\;\; x \in (l,r),
 \end{equation*}
 and a function $\phi: (l,r)\mapsto (0,\infty)$ which satisfies the eigenvalue equation  ${\mathcal L} \phi = \lambda \phi$. We define a first order differential operator 
\begin{equation}\label{def:D_phi}
{\mathcal D}_{\phi}:=\partial_x-\frac{\phi'(x)}{\phi(x)}=\phi(x) \partial_x \Big(\frac{1}{\phi(x)}\Big)
\end{equation}
 and check that
\begin{equation*}
{\mathcal L}=\lambda {\mathcal I} + \frac{1}{2}{\mathcal A} {\mathcal B},
\end{equation*}
where
\begin{equation*}
{\mathcal A}={\mathcal D}_{\frac{1}{\phi}}=\partial_x+\frac{\phi'(x)}{\phi(x)}, \;\;\;
{\mathcal B}={\mathcal D}_{\phi}:=\partial_x-\frac{\phi'(x)}{\phi(x)}.
\end{equation*}
The Darboux transform of ${\mathcal L}$ is defined as
\begin{equation*}
 \widetilde {\mathcal L}=\lambda {\mathcal I} + \frac{1}{2} {\mathcal B} 
 {\mathcal A}=\frac{1}{2} \partial_x^2 - \tilde c(x),
\end{equation*}
where 
\begin{equation*}
\tilde c(x):=c(x)-\partial_x^2 \ln \phi(x). 
\end{equation*}
The function $\phi$ that is used to construct the Darboux transform is called {\it the seed function}.
One of the important property of the Darboux transformation, is the following intertwining relation 
\begin{equation}\label{L_tilde_L_intertwining}
 \widetilde \L \D_\phi = \D_\phi \L,
\end{equation}
In particular, if $f$ is a solution to $\L f=\kappa f$ then $g=\D_\phi f$ solves $\widetilde \L g=\kappa g$.

The Darboux transform can be iterated, in which case it is called 
called the Darboux-Crum transformation.  The main properties of the Darboux -Crum transformation are collected in the next 

\begin{theorem}[\cite{Crum_1955,Gomez_2020}]\label{thm_Crum}
Let $h_1,\dots,h_n$ be a set of $n$ eigenfunctions of $\mathcal L$. We perform an $n$-step Darboux transformation with these seed eigenfunctions and obtain a chain of second order linear  operators
$$
\mathcal L\to \mathcal L^{(1)}\to \dots \to \mathcal L^{(n)}.
$$
The operator $\mathcal L_n$ is given by
\begin{equation}\label{eqn:L_n}
\mathcal L^{(n)}=\frac{1}{2}\partial^2_{x}- c_n(x)=\frac{1}{2}\partial^2_{x}-c(x)+ 
\partial_x^2 \ln \textnormal{Wr}\{h_1,\dots,h_n \}
\end{equation}
If $\psi$ is a solution to $\mathcal L \psi = \kappa \psi$, then the function
\begin{equation}\label{Crum}
\psi_n=\frac{\textnormal{Wr}\{h_1,\dots,h_n,\psi \}}{\textnormal{Wr}\{h_1,\dots,h_n \}}
\end{equation}
is a solution to $\mathcal L^{(n)} \psi_n = \kappa \psi_n$.
\end{theorem}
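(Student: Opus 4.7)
My plan is to prove Theorem \ref{thm_Crum} by induction on $n$. The base case $n=1$ is essentially established in the paragraphs preceding the theorem: with seed $h_1$, the Darboux transform $\widetilde \L$ has potential $c - \partial_x^2 \ln h_1 = c - \partial_x^2 \ln \wr\{h_1\}$, matching \eqref{eqn:L_n}; and the intertwining relation \eqref{L_tilde_L_intertwining} gives $\D_{h_1}\psi = (h_1\psi' - h_1'\psi)/h_1 = \wr\{h_1,\psi\}/\wr\{h_1\}$ as a $\kappa$-eigenfunction of $\widetilde \L$ whenever $\L\psi = \kappa\psi$, which is \eqref{Crum} for $n=1$.

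For the inductive step, assume both claims at level $n-1$, and abbreviate $W_k := \wr\{h_1,\dots,h_k\}$ and $W_k[\psi] := \wr\{h_1,\dots,h_k,\psi\}$. Applying \eqref{Crum} at level $n-1$ with the extra solution $\psi = h_n$ (of eigenvalue $\lambda_n$) shows that $\phi^{(n-1)} := W_n/W_{n-1}$ is an eigenfunction of $\L^{(n-1)}$. I would then perform one more single-step Darboux transformation of $\L^{(n-1)}$ with $\phi^{(n-1)}$ as the seed. For the new potential,
$$
c_{n-1} - \partial_x^2 \ln \phi^{(n-1)} = (c - \partial_x^2 \ln W_{n-1}) - \partial_x^2 \ln(W_n/W_{n-1}) = c - \partial_x^2 \ln W_n,
$$
which is \eqref{eqn:L_n} at level $n$. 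The inductive hypothesis applied to $\psi$ gives $\psi_{n-1} := W_{n-1}[\psi]/W_{n-1}$ as a $\kappa$-eigenfunction of $\L^{(n-1)}$, and the intertwining relation then yields that $\psi_n := \D_{\phi^{(n-1)}} \psi_{n-1}$ is a $\kappa$-eigenfunction of $\L^{(n)}$. A short computation using the identity $\wr\{f,g\} = f^2 (g/f)'$ rewrites this as
$$
\psi_n = \frac{W_n}{W_{n-1}} \left(\frac{W_{n-1}[\psi]}{W_n}\right)' = \frac{\wr\{W_n, W_{n-1}[\psi]\}}{W_{n-1} \, W_n},
$$
so matching with the desired formula $W_n[\psi]/W_n$ from \eqref{Crum} reduces to the single Wronskian identity $\wr\{W_n, W_{n-1}[\psi]\} = W_{n-1} \cdot W_n[\psi]$.

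The main obstacle is precisely this last identity; it is a specific instance of the classical Jacobi (Desnanot--Sylvester) identity for determinants. I would establish it either by applying Sylvester's determinant identity to the $(n+1)\times(n+1)$ Wronskian matrix with entries $h_j^{(i-1)}$ augmented by a $\psi$-row, recognizing $W_{n-1}$, $W_n$, $W_{n-1}[\psi]$, and $W_n[\psi]$ as the relevant connected minors; or by an independent sub-induction on $n$ using cofactor expansion along the last column of $W_n[\psi]$ together with the Leibniz product rule. Once this identity is secured, both halves of the inductive step close simultaneously and the theorem follows.
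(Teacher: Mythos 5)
The paper does not actually prove Theorem~\ref{thm_Crum}; it imports the result from \cite{Crum_1955,Gomez_2020}, so there is no in-paper argument to compare against. Your induction is, however, exactly the classical proof (it is essentially Crum's original argument), and it is sound: the base case correctly matches the single-step Darboux transform and the intertwining relation \eqref{L_tilde_L_intertwining}; the potential at level $n$ telescopes as you say, since $c_{n-1}-\partial_x^2\ln\bigl(W_n/W_{n-1}\bigr)=c-\partial_x^2\ln W_n$; and the computation $\D_{\phi^{(n-1)}}\psi_{n-1}=\wr\{W_n,W_{n-1}[\psi]\}/(W_{n-1}W_n)$ is correct. You have also correctly isolated the one nontrivial ingredient, the Wronskian identity $\wr\{\wr\{h_1,\dots,h_n\},\wr\{h_1,\dots,h_{n-1},\psi\}\}=\wr\{h_1,\dots,h_{n-1}\}\cdot\wr\{h_1,\dots,h_n,\psi\}$, which is indeed an instance of the Jacobi (Desnanot--Sylvester) identity applied to the $(n+1)\times(n+1)$ matrix whose rows are successive derivatives of $h_1,\dots,h_n,\psi$; either of your two suggested derivations works. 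The only caveat worth recording explicitly is a regularity assumption that both you and the theorem statement leave implicit: each intermediate seed $\phi^{(n-1)}=W_n/W_{n-1}$ must be nonvanishing on the interval for $\D_{\phi^{(n-1)}}$ and the logarithms to make sense (in the paper's application this holds because the seeds are the consecutive eigenfunctions $\sin(k\pi x)$, whose Wronskians are nonzero multiples of powers of $\sin(\pi x)$ on $(0,1)$). With that hypothesis stated, your proof is complete in outline and fills a gap the paper delegates to the references.
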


%Formula \eqref{Crum} is the key result that connects determinants $W_n^{(k)}(x)$ with eigenfunctions of second order differential operators. Our plan is to construct certain operator $\L$ with appropriate $h_k$ to let the right-hand side of \eqref{Crum} is $W_{n}^{(k)}(x)/W_{n-1}^{(0)}(x)$ and we will establish the result in \eqref{eqn:Wkn_thm1} with the help of Proposition \ref{Prop2}.
%By applying the $n$-step Darboux transformation, the resulting eigenfunction $\psi^{(n)}$ can be expressed as the ratio of two Wronskian determinants. Moreover, if we substitute $h_i(x)$ by $\sin(i \pi x)$, the resulting expression matches the form of the left-hand side in \eqref{eqn:Wkn_thm1}. Our goal is to use this $n$-fold Darboux transformation to construct the operator defined in \eqref{def:L_mu}. On one hand, we know from the transformation procedure that its eigenfunctions are characterized by a Wronskian-determinant structure. On the other hand, Proposition \ref{Prop2} indicates that these eigenfunctions can also be expressed in terms of Gegenbauer polynomials. This dual characterization enables us to establish a direct connection between Wronskian determinants and Gegenbauer polynomials, thereby allowing us to complete the proof of our main result.

Consider an operator $\L=\frac{1}{2}\partial^2_{x}$ on $x \in (0,1)$ with Dirichlet boundary conditions. It is clear that $h_k(x)=\sin(\pi k x)$ are the eigenfunctions of $\L$ with eigenvalues $-\pi^2 k^2/2$. We perform Darboux-Crum transformation times with $h_k(x)=\sin(k \pi x)$, $k=1,2,\dots,n-1$. According to \eqref{W_n^0}, we have  $\textnormal{Wr}\{h_1,\dots,h_{n-1} \}=C(n) \sin^{\frac{n(n-1)}{2}}(\pi x)$ and from \eqref{eqn:L_n} we conclude that  the transformed operator $\L^{(n-1)}$ is 
$$
\L^{(n-1)}=\frac{1}{2} \partial_x^2-\frac{1}{2}n(n-1)\pi^2\csc^2(\pi x). 
$$

Next we take $\psi(x)=h_{n+k}(x)=\sin((n+k)\pi x)$ and compute $\psi_{n-1}$ in formula \eqref{Crum} (which we denote by $g_k^{(n-1)}(x)$)
\begin{equation}\label{eqn:g}
g^{(n-1)}_k(x)=\frac{\wr\{\sin(\pi x), \sin(2\pi x), \dots, \sin((n-1)\pi x),
\sin((n+k) \pi x) \}}{\wr\{\sin(\pi x), \sin(2\pi x), \dots, \sin((n-1)\pi x)\}}=\frac{W_{n}^{k}(\pi x)}{W_{n-1}^{0}(\pi x)}. 
\end{equation}
According to Theorem \ref{thm_Crum}, the function $g^{(n-1)}_k(x)$ 
is a solution to the second-order linear differential equation $\L^{(n-1)} g = - \frac{1}{2}\pi^2 (n+k)^2 g$. 
On the other hand, we note that $\L^{(n-1)}=\L_{\mu_n}$ as defined in \eqref{def:L_mu}, with
\begin{equation}\label{eqn:mu}
\mu_n:=\frac{1}{2} n(n-1) \pi^2.  
\end{equation} 
From \eqref{def_nu} we find that $\nu=n$, and Proposition \ref{Prop2} tells us that $f_{k}^{(\mu_n)}(x)$ is also a solution to $\L^{(n-1)} g = - \frac{1}{2}\pi^2 (n+k)^2 g$. Because a second-order linear differential equation $\L^{(n-1)} g = - \frac{1}{2}\pi^2 (n+k)^2 g$ has two linearly independent solutions, and Proposition \ref{Prop2} indicates that $f_{k}^{(\mu_n)}(x)$ is the unique solution (up to multiplication by a constant) in $L_2((0,1),\d x)$, it follows from Crum \cite{Crum_1955} that the solutions constructed in Theorem \ref{thm_Crum} are $L_2((0,1),\d x)$ closed and complete, and converge to zero as $x\to 0^+$ or $x\to 1-$.  We conclude that the two solutions $g^{(n-1)}_k(x)$ and $f^{(\mu_{n})}_{k}(x)$ to $\L^{(n-1)} g = - \frac{1}{2}\pi^2 (n+k)^2 g$ differ only by a multiplicative constant:
\begin{equation}\label{eqn:fg}
g^{(n-1)}_k(x)=C f^{(\mu_{n})}_{k}(x),\ \ \ k\ge 0. 
\end{equation}

We can determine the constant $C$ (up to $\pm$ sign)  via the following result, whose proof can be found in \cite{KY_2024}. 
 \begin{proposition}\label{Prop3}
Consider a second order linear differential operator $\L=\frac{1}{2} \partial_y^2-c(y)$, where $c$ is a continuous function of $y\in (l,r)$.  Assume that $h$, $f$ and $g$ are  $C^2$ functions on $(l,r)$ such that 
\begin{itemize}
\item[(i)] $h$ is positive and satisfies $\L h=\lambda h$ for $y\in (l,r)$;
\item[(ii)] $g$ satisfies $\L g=\kappa g$ for $y\in (l,r)$. 
\end{itemize}
Denote $\tilde f={\mathcal D}_h f$ and $\tilde g=\mathcal D_h g$. Where $\D$ is define in \eqref{def:D_phi}. Then  
\begin{equation}\label{Prop3:eqn2}
\int \tilde f(y) \tilde g(y) \d y=f(y) \tilde g(y) +2(\lambda-\kappa) \int f(y) g(y) \d y.
\end{equation}
\end{proposition}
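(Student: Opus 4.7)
The plan is to establish the pointwise identity
$$\tilde f(y)\,\tilde g(y) \;=\; \frac{\d}{\d y}\bigl(f(y)\,\tilde g(y)\bigr) + 2(\lambda-\kappa)\,f(y)\,g(y),$$
after which \eqref{Prop3:eqn2} follows immediately by taking antiderivatives, with the term $f(y)\tilde g(y)$ playing the role of the boundary contribution. This reformulation is equivalent to the stated identity because $\int \tilde f\tilde g\,\d y$ and $\int fg\,\d y$ are antiderivatives defined only up to an additive constant.

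The key input is the operator factorization that underlies every Darboux transformation, already recalled in the paper just above \eqref{L_tilde_L_intertwining}. Since $h>0$ and $\L h=\lambda h$, one verifies directly, using $h''=2(c+\lambda)h$, that
$$\L \;=\; \lambda\,\mathcal I + \tfrac12\A\B,\qquad \A:=\partial_y+\tfrac{h'}{h},\quad \B:=\D_h=\partial_y-\tfrac{h'}{h}.$$
Applied to the eigenfunction $g$, this yields $\A\tilde g=\A\B g=2(\L-\lambda)g=2(\kappa-\lambda)g$. Hypotheses (i) and (ii) enter only through this single calculation; the function $f$ itself is not required to be an eigenfunction.

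The pointwise identity is now a one-line algebraic check. Expanding the derivative and using $f'-\tilde f=(h'/h)f$, one has $(f\tilde g)'-\tilde f\tilde g=(f'-\tilde f)\tilde g+f\tilde g'=(h'/h)\,f\tilde g+f\tilde g'=f\,\A\tilde g$, which by the preceding step equals $2(\kappa-\lambda)fg$. Rearranging gives exactly the pointwise identity above, and integrating in $y$ produces \eqref{Prop3:eqn2}. Equivalently, one can present this step as integration by parts applied to $\int(\B f)\,\tilde g\,\d y$, using that $-\A$ is the formal adjoint of $\B$.

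I do not expect any serious obstacle: the statement is essentially a bookkeeping identity recording how much the pairing $\int\tilde f\tilde g\,\d y$ deviates from the boundary term $f\tilde g$, namely by a multiple of $\int fg\,\d y$ whose coefficient is dictated by the spectral gap $\lambda-\kappa$. The only nontrivial ingredient is the factorization of $\L$, which, thanks to the positivity of $h$ and the eigenvalue equation $\L h=\lambda h$, is a routine verification.
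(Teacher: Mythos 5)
Your proof is correct and complete. The paper itself does not prove Proposition~\ref{Prop3} (it defers to \cite{KY_2024}), but your argument supplies a valid self-contained proof using exactly the machinery the paper already sets up: the factorization $\L=\lambda\mathcal I+\tfrac12\A\B$ with $\A=\partial_y+h'/h$, $\B=\D_h$, the consequence $\A\tilde g=2(\kappa-\lambda)g$, and the pointwise identity $\tilde f\tilde g=(f\tilde g)'+2(\lambda-\kappa)fg$, which integrates to \eqref{Prop3:eqn2}. Your observation that $f$ need not be an eigenfunction, and that the identity is one between antiderivatives (so determined up to an additive constant), is exactly the right reading of the statement.
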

We apply Proposition \ref{Prop3} with $\L=\L^{(n-1)}$,  $g=f=g^{(n-1)}_{k}(x)$,  $\kappa=- \frac{1}{2}\pi^2 (n+k)^2$, and $\lambda=- \frac{1}{2} n^2 \pi^2$.  From \eqref{Prop3:eqn2} we find
$$
\int {\tilde g}^2(x)  \d x=g(x) \tilde g(x) +((n+k)^2-n^2)\pi^2\int g^2(x)\d x. 
$$
The intertwining relationship \eqref{L_tilde_L_intertwining} implies 
$\tilde g={\mathcal D}_{h} g^{(n-1)}_{k}=g^{(n)}_{k-1}$. According to Crum \cite{Crum_1955}, both function $g(x)$ and $\tilde g(x)$ approach zero as $x\to 0^+$ or $x\to 1-$. Thus 
$$
|| \tilde g ||^2=\int_0^{1} {\tilde g}(x)^2  \d x=g(1-) \tilde g(1-) -g(0+) \tilde g(0+)+((n+k)^2-n^2)\pi^2\int_0^{1} {g}(x)^2  \d x=((n+k)^2-n^2)\pi^2 || g ||^2,
$$
and we obtain the recurrence relation:
$$
||g^{(n)}_{k-1}||^2=\pi^2((n+k)^2-n^2)||g^{(n-1)}_{k}||^2. 
$$
Since $|| g^{(0)}_k||=\int_0^{1} \sin^2(\pi kx)  \d x=1/2$, we find the $L_2((0,1),\d x)$ norm of $g^{(n-1)}_n$ as follows
$$
||g^{(n-1)}_k||^2=\pi^{2n-2}\prod_{i=1}^{n-1}((n+k)^2-i^2)||g^{(0)}_{n+k-1}||^2=\frac{1}{2}\pi^{2n-2}\prod_{i=1}^{n-1}((n+k)^2-i^2)=\frac{1}{2}\pi^{2n-2}\frac{\Gamma(2n+k)}{(n+k)k!}. 
$$
From \eqref{eqn:normf} we find
\begin{equation}
||f^{(\mu_{n})}_{k}(x)||^2=\int_{0}^1 (f^{(\mu_{n})}_{k}(x))^2  \d x=\frac{\Gamma(2n+k)}{2(n+k)k!}.
\end{equation}
Combining the above two results with \eqref{eqn:fg}, shows that 
$C=\pm \pi^{n-1}$. Hence 
$$
\frac{W_{n}^{k}(\pi x)}{W_{n-1}^{0}(\pi x)}=g^{(n-1)}_{k}(x)=C  f^{(\mu_{n})}_{k}(x)=\pm \pi^{n-1} 2^{n-1}\Gamma(n) \sin^{n}(\pi x)C_{k}^{(n)}(\cos (\pi x)). 
$$
Since the value of $W_{n-1}^{0}(\pi x)$ is known from \cite{Larsen_1990} (see \eqref{W_n^0}), we conclude that 
$$
W^{(k)}_{n}(\pi x)=W_{n-1}^{0}(\pi x)g^{(n-1)}_{k}(\pi x)=\pm (-2)^{\frac{n(n-1)}{2}}\pi^{\frac{n(n-1)}{2}}G(n+1)\sin^{\frac{n(n+1)}{2}}(\pi x)C_{k}^{(n)}(\cos (\pi x)), 
$$
which reproduces the result from Theorem~\ref{thm1}, only up to a $\pm$ sign.
%*******************************************
%*******************************************
%\bibliographystyle{abbrv}
%\bibliography{references}
%*******************************************
%*******************************************

\end{document}